\renewcommand{\ker}{\textrm{Ker }} \renewcommand{\hom}{\textrm{Hom}}
\DeclareMathOperator{\ext}{Ext}
\newcommand{\modr}{\textrm{Mod-}R}
\newcommand{\rmod}{R\textrm{-Mod}}
\newcommand{\cf}{\textrm{cf}}
\newcommand{\br}{\textrm{Br}}
\newcommand{\supp}{\textrm{supp}}
\newcommand{\rest}{\upharpoonright}
\newtheorem{definition}{Definition}[section]
\newtheorem{proposition}[definition]{Proposition}
\newtheorem{theorem}[definition]{Theorem}
\newtheorem{example}[definition]{Example}
\newtheorem{examples}[definition]{Examples}
\newtheorem{lemma}[definition]{Lemma}
\newtheorem{corollary}[definition]{Corollary}
\newtheorem{remark}[definition]{Remark}
\title[THE COTORSION PAIR GENERATED BY FLAT MITTAG-LEFFLER MODULES]{THE COTORSION PAIR GENERATED BY THE CLASS OF FLAT MITTAG-LEFFLER MODULES}
\author{Manuel Cort\'es-Izurdiaga} \address{Department of Mathematics,
  University of Almeria, E-04071, Almeria, Spain} \date{\today}
\thanks{The author is very grateful to J. \v{S}aroch for many
  interesting discussions on the subject and, specially, for
  the proof of Proposition \ref{p:AlephOmegaFlats}. Partially supported by research project MTM-2014-54439 and by
  research group ``Categor\'{\i}as, computaci\'on y teor\'{\i}a de
  anillos'' (FQM211) of the University of Almer\'{\i}a}
\email{mizurdia@ual.es} \keywords{Flat Mittag-Leffler module;
  cotorsion pair; totally ordered direct limit of projectives}
\subjclass[2000]{16D40, 16E99}
\begin{document}

\maketitle

\begin{abstract}
  Let $R$ be a ring and denote by $\mathcal{FM}$ the class of all flat
  and Mittag-Leffler left $R$-modules. In \cite{BazzoniStovicek2} it
  is proved that, if $R$ is countable, the orthogonal class of
  $\mathcal{FM}$ consists of all cotorsion modules. In this note we
  extend this result to the class of all rings $R$ satisfying that
  each flat left $R$-module is filtered by totally ordered direct limits of
  projective modules. This class of rings contains all countable, left
  perfect and discrete valuation domains. Moreover, assuming that
  there do not exist inaccessible cardinals, we obtain that, over
  these rings, all flat left $R$-modules have finite projective
  dimension.
\end{abstract}

\section*{INTRODUCTION}
\label{sec:introduction}

Let $R$ be an associative ring with unit. A left $R$-module $M$ is
said to be Mittag-Leffler if for each family $\{M_i:i \in I\}$ of
right $R$-modules, the canonical morphism from $\left(\prod_{i \in
    I}M_i\right) \otimes_R M$ to $\prod_{i \in I}M_i \otimes_RM$ is
monic. Mittag-Leffler modules were introduced in \cite{RaynaudGruson}
and, in the last few years,
they have been brought into play in different settings, such
as to solve the Baer splitting problem raised by Kaplansky in 1962 in
\cite{Kaplansky} (see \cite{AngeleriBazzoniHerbera}), or in proving
that each tilting class is determined by a class of finitely presented
modules (see \cite{BazzoniStovicek}). Moreover, in
\cite[p. 266]{Drinfeld}, Drinfeld has proposed quasi-coherent sheaves
whose sections at affine open sets are flat and Mittag-Leffler modules
as the appropriate objects defining infinite-dimensional vector
bundles on a scheme (these sheaves are called Drinfeld vector bundles
in \cite{EstradaGuilPrestTrlifaj}). Since then, many authors have
studied the homological and homotopical properties of the class of all
flat and Mittag-Leffler modules.

In this paper we are interested is the cotorsion pair generated by the
class $\mathcal{FM}$ of all flat and Mittag-Leffler left $R$-modules,
that is, with $({^\perp}(\mathcal{FM}^\perp),\mathcal{FM}^\perp)$
(where, for each class of left $R$-modules $\mathcal X$,
${^\perp}\mathcal X$ and $\mathcal X^{\perp}$ are the left and right
orthogonal classes of $\mathcal X$ with respect to the
$Ext$-functor). In \cite{BazzoniStovicek2} it is proved that if $R$ is
a countable ring, this cotorsion pair is precisely the Enochs
cotorsion pair, i. e., the one generated by the flat modules. The key
of the proof is Theorem 6 (a) which states that, for any ring, any
countable totally ordered direct limit of flat Mittag-Leffler left
$R$-modules belongs to the double orthogonal class
${^\perp}(\mathcal{FM}^\perp)$ (this result extends \cite[Theorem
3.8]{SarochTrlifaj}, which gives another proof using
the Singular Cardinal Hypothesis). In this paper we prove that this
result is true for each (not necessarily countable) totally ordered
direct limit of flat Mittag-Leffler left $R$-modules by extending the construction
given in \cite{SlavikTrlifaj}. As a consequence, we obtain that
$({^\perp}(\mathcal{FM}^\perp),\mathcal{FM}^\perp)$ is the Enochs
cotorsion pair for those rings in which each flat left $R$-module is
filtered by totally ordered direct limits of projective modules.

This last condition is treated in Section 3. The Govorov-Lazard theorem states that each flat module is a direct
limit of projective modules, see \cite{Govorov} and
\cite{Lazard1964}. But, when is each flat module a totally ordered
direct limit of projective modules? That is, if $F$ is a flat left
$R$-module, when does there exist a direct system of projective
modules, $(P_i,f_{ij})_{i,j \in I}$, such that $I$ is totally ordered
and $\displaystyle F \cong \lim_{\longrightarrow}P_i$? More generally:
which are the rings for which each flat module is filtered by totally
ordered direct limits of projective modules?. We devote Section 3 to study
this class of rings, which contains all countable, left perfect and
discrete valuation rings. We prove that there exist rings not
satisfying this property (Example \ref{e:example}) and, assuming that
there do not exist inaccessible cardinals, we prove that, for any ring
$R$ satisfying this property, there exists a natural number $n$ such
that $R$ is left $n$-perfect, that is, each flat left $R$-module has
projective dimension less than or equal to $n$ (Corollary
\ref{c:n-perfect}).

\section{PRELIMINARIES}
\label{sec:preliminaries}

For any set $A$ we shall denote by $|A|$ its cardinality. Given a map
$f:A \rightarrow B$ and a subset $A' \subseteq A$, $f \rest A'$ will
be the restriction of $f$ to $A'$. For any cardinal $\lambda$, we
shall denote its cofinality by $\cf(\lambda)$ and by $\lambda^+$ the
next cardinal of $\lambda$. An infinite cardinal $\lambda$ is said to
be inaccessible if $\lambda$ is limit and regular. The existence of
inaccessible cardinals cannot be proved in ZFC; moreover, the
existence of inaccessible cardinals is relatively consistent with ZFC.

We fix, for the rest of the paper, a non necessarily commutative ring
with identity $R$; module will mean left $R$-module. Given any
family of modules, $\{M_i:i \in I\}$, and $x \in \prod_{i \in I}M_i$, we
shall denote by $x(i)$ the ith-coordinate of $x$ for each $i \in I$
and by $\supp(x) = \{i \in I:x(i) \neq 0\}$. If $\lambda$ is an
infinite cardinal, we shall denote by $\prod_{i \in I}^\lambda M_i$
the $\lambda$-product of the family $\{M_i:i \in I\}$, that is,
\begin{displaymath}
  \left\{x \in \prod_{i \in I}M_i:|\supp(x)| < \lambda\right\}.
\end{displaymath}
Let $\mathcal X$ be a class of modules, $(X_i,f_{ij})_{i, j \in I}$ a
direct of system of modules belonging to $\mathcal X$ and $M$ its
direct limit. If $I$ is totally ordered (resp. well ordered), we shall say
that $M$ is a totally ordered (resp. well ordered) direct limit of modules
belonging to $\mathcal X$. A $\mathcal X$-filtration of a module $N$
is a continuous chain of submodules of $N$,
$\{N_\alpha:\alpha < \kappa\}$, where $\kappa$ is an ordinal, such
that $N = \bigcup_{\alpha < \kappa}N_\alpha$ and
$\frac{N_{\alpha+1}}{N_\alpha} \in \mathcal X$ for each
$\alpha < \kappa$. In this case, we shall say that $N$ is
$\mathcal X$-filtered. Concerning continuous chains of direct
summands, we shall use the following well known lemma (see, for
instance, \cite[Lemme (3.1.2)]{RaynaudGruson} or the proof of \cite[Lemma 3.3]{GuilIzurdiagaTorrecillas2}):

\begin{lemma}\label{l:DirectSum}
  Let $M$ be a module, $\lambda$ an infinite regular cardinal and
  $\{M_\alpha:\alpha < \lambda\}$ a continuous chain of submodules of
  $M$ such that $M_0=0$ and $M = \bigcup_{\alpha < \lambda}M_\alpha$. Suppose
  that, for each $\alpha < \lambda$, there exists a submodule
  $N_\alpha \leq M$ such that $M_{\alpha+1}=M_\alpha \oplus
  N_{\alpha}$. Then $M = \bigoplus_{\alpha < \lambda}N_\alpha$.
\end{lemma}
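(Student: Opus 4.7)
The plan is to prove by transfinite induction on $\alpha<\lambda$ the intermediate statement
\begin{displaymath}
M_\alpha = \bigoplus_{\beta<\alpha}N_\beta.
\end{displaymath}
Once this is established for every $\alpha<\lambda$, the conclusion $M = \bigoplus_{\alpha<\lambda}N_\alpha$ follows by applying the same kind of argument at the top: we have $M = \bigcup_{\alpha<\lambda}M_\alpha$ by hypothesis, and an increasing union of direct sums that agree on their common pieces is again a direct sum.

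The base case $\alpha=0$ is immediate from $M_0=0$. The successor step is a direct application of the hypothesis: combining the induction assumption $M_\alpha = \bigoplus_{\beta<\alpha}N_\beta$ with $M_{\alpha+1} = M_\alpha \oplus N_\alpha$ gives $M_{\alpha+1} = \bigoplus_{\beta\leq\alpha}N_\beta$. The only step that needs care is the limit case: if $\alpha<\lambda$ is a limit ordinal, continuity of the chain provides $M_\alpha = \bigcup_{\beta<\alpha}M_\beta$, and by the induction hypothesis each $M_\beta$ equals $\bigoplus_{\gamma<\beta}N_\gamma$. Checking that this increasing union is itself the direct sum $\bigoplus_{\gamma<\alpha}N_\gamma$ is formal: every element lies in some $M_\beta$ with $\beta<\alpha$ and thus has a finite expansion in the $N_\gamma$ with $\gamma<\beta$; conversely, any finite relation among the $N_\gamma$ with $\gamma<\alpha$ involves only finitely many indices, which (because $\alpha$ is a limit) all lie below some $\beta<\alpha$, forcing the relation to be trivial inside $M_\beta = \bigoplus_{\gamma<\beta}N_\gamma$.

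There is essentially no obstacle here: the argument is a routine transfinite induction whose only conceptual ingredient is that continuity of the chain is precisely what lets direct sum decompositions pass through limit stages. The regularity of $\lambda$ is not actually required to make the induction go through — the same argument works for any chain indexed by an ordinal — but it is harmless and matches the form in which the lemma is typically invoked later.
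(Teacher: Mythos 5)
Your proof is correct; the paper itself states this lemma without proof (citing Raynaud--Gruson and Guil Asensio--Izurdiaga--Torrecillas), and your transfinite induction on the statement $M_\alpha=\bigoplus_{\beta<\alpha}N_\beta$, with continuity handling the limit stages, is exactly the standard argument those references use. Your observation that regularity of $\lambda$ is not needed (any limit ordinal suffices) is also accurate.
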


We shall denote by $\textrm{Sum}(\mathcal X)$
the class consisting of all direct sums of modules in $\mathcal X$ and
by ${^\perp}\mathcal X$ and $\mathcal X^\perp$ the corresponding
$\textrm{Ext}$-orthogonal classes, i. e., 
\begin{displaymath}
  \mathcal X^\perp = \{Y \in \rmod: \ext_R^1(X,Y) = 0 \quad \forall X \in \mathcal{X}\}
\end{displaymath}
and
\begin{displaymath}
  {^\perp}\mathcal{X} = \{Y \in \rmod: \ext_R^1(Y,X) = 0 \quad \forall X \in
  \mathcal{X}\}.
\end{displaymath}
A {\em cotorsion pair} in $\rmod$ is a pair of classes of modules,
$(\mathcal F, \mathcal C)$, such that $\mathcal F = {}^{\perp}\mathcal
C$ and $\mathcal C = \mathcal F^\perp$. The pair of classes
$({^\perp}(\mathcal X^\perp),\mathcal X^\perp)$ is a cotorsion pair
which is called the cotorsion pair generated by $\mathcal X$.

Let $\mathcal Y$ be a class of right $R$-modules. A left $R$-module
$M$ is said to be $\mathcal Y$-Mittag-Leffler if for each family
$\{Y_i:i \in I\}$ of right $R$-modules belonging to $\mathcal Y$, the
canonical morphism from $\left(\prod_{i \in I}Y_i\right) \otimes_R M$
to $\prod_{i \in I}Y_i \otimes_RM$ is monic. We shall denote by
$\mathcal M^{\mathcal Y}$ the class of all $\mathcal Y$-Mittag-Leffler
left $R$-modules and by $\mathcal{FM}^{\mathcal Y}$ the class of all flat and
$\mathcal Y$-Mittag-Leffler left $R$-modules. When $\mathcal Y=\modr$, we shall call
$\mathcal Y$-Mittag-Leffler modules simply Mittag-Leffler modules.

In \cite[Theorem 2.6]{HerberaTrlifaj}, $\mathcal Y$-Mittag-Leffler
modules are characterized in terms of a local property. We shall use
the following more general definition, which was stated in
\cite[Definition 2.2]{Izurdiaga}.

\begin{definition}
  Let $\kappa$ be an infinite regular cardinal. We shall say that a
  module $M$ is $(\kappa,\mathcal X)$-free if it has a
  $(\kappa,\mathcal X)$-dense system of submodules, that is, a direct
  family of submodules of $M$, $\mathcal S$, satisfying:
  \begin{enumerate}
  \item $\mathcal S \subseteq \mathcal X$;

  \item $\mathcal S$ is closed under well ordered ascending chains of
    length smaller than $\kappa$, and

  \item every subset of $M$ of cardinality smaller than $\kappa$ is
    contained in an element of $\mathcal S$.
  \end{enumerate}
\end{definition}

\begin{remark}
  A similar definition is given in \cite[Definition
2.1]{SlavikTrlifaj}. Note that in our definition the modules in the
dense system need not be $< \kappa$-generated.
\end{remark}

Given an infinite regular cardinal $\kappa$, we shall say that
$\mathcal X$ is closed under $\kappa$-free modules if each
$(\kappa,\mathcal X)$-free module belongs to $\mathcal X$. The
relationship between this local free property and
$\mathcal Y$-Mittag-Leffler, which was proved in \cite[Theorem
2.6]{HerberaTrlifaj}, is that the class of $\mathcal Y$-Mittag-Leffler
is closed under $\aleph_1$-free modules.

\begin{theorem}\label{t:MittagLefflerCharacterization}
  The classes $\mathcal M^{\mathcal Y}$ and $\mathcal{FM}^{\mathcal
    Y}$ are closed under $\aleph_1$-free modules.
\end{theorem}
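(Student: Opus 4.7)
The plan is to reduce the statement to the corresponding characterization of Herbera and Trlifaj in \cite[Theorem 2.6]{HerberaTrlifaj}, exploiting the fact that the condition defining $\mathcal Y$-Mittag-Leffler is \emph{local} on finite tuples and \emph{monotone upward} under submodule inclusion. Recall that \cite[Theorem 2.6]{HerberaTrlifaj} characterizes $\mathcal Y$-Mittag-Leffler modules by the following condition: for every finite tuple $\bar x = (x_1,\ldots,x_n)$ from $M$, there exist a finitely presented module $F$ and a morphism $f \colon R^n \to F$ such that, for every $Y \in \mathcal Y$ and every $h \in \hom_R(M,Y)$, the tuple $(h(x_1),\ldots,h(x_n))$ lies in the image of the map $\hom_R(F,Y) \to Y^n$ induced by $f$ and the canonical basis of $R^n$.

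The crucial observation is that this witnessing condition is monotone in the ambient module. Precisely, suppose $N \subseteq M$ and $\bar x \in N^n$, and suppose $f$ witnesses the condition for $\bar x$ inside $N$. Then $f$ also witnesses it inside $M$: any $h \in \hom_R(M,Y)$ restricts to $h\rest N \in \hom_R(N,Y)$ with the same values on the $x_i$, so the factorization obtained for $h\rest N$ works verbatim for $h$. Hence the $\mathcal Y$-Mittag-Leffler property, formulated at the level of finite tuples, is inherited by every module that contains a $\mathcal Y$-Mittag-Leffler submodule containing the given tuple.

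With this in hand the proof is immediate. Let $M$ be $(\aleph_1, \mathcal M^{\mathcal Y})$-free with dense system $\mathcal S$, and fix a finite tuple $\bar x \subseteq M$. Since $\bar x$ has cardinality less than $\aleph_1$, density yields an $N \in \mathcal S$ with $\bar x \subseteq N$; since $N \in \mathcal M^{\mathcal Y}$, the Herbera--Trlifaj condition is satisfied by some $f$ for $\bar x$ inside $N$, and by the monotonicity observation the same $f$ satisfies it inside $M$. As $\bar x$ was arbitrary, $M \in \mathcal M^{\mathcal Y}$. The corresponding statement for $\mathcal{FM}^{\mathcal Y}$ then reduces to verifying flatness: density forces $M = \bigcup_{N \in \mathcal S} N$ (every singleton lies in some $N \in \mathcal S$), $\mathcal S$ is directed, and a directed union of flat submodules is flat, so $M$ is flat and therefore in $\mathcal{FM}^{\mathcal Y}$.

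The argument does not require iteration, cardinal arithmetic, or elementary-submodel techniques; the only obstacle is isolating the local character of the Herbera--Trlifaj characterization in a form where the upward monotonicity in the ambient module is transparent. In particular, the weakening of the $(\kappa,\mathcal X)$-free notion mentioned in the Remark preceding the theorem (namely, not requiring the members of the dense system to be $<\kappa$-generated) plays no role once the reduction is carried out, since the conclusion in the Herbera--Trlifaj setting is about $M$ itself rather than about the generation of its approximations.
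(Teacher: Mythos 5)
There is a genuine gap, and it sits exactly in the ``monotonicity'' observation on which your whole reduction rests. The $\mathcal Y$-Mittag-Leffler condition is a condition on tensor products $\left(\prod_{i}Y_i\right)\otimes_R M \rightarrow \prod_i\left(Y_i\otimes_R M\right)$ with the $Y_i$ \emph{right} modules and $M$ a \emph{left} module, so the witnessing condition you quote from \cite[Theorem 2.6]{HerberaTrlifaj} in terms of $\hom_R(M,Y)$ does not even typecheck in this setting; and the correct local characterization (in terms of kernels of maps $Y\otimes_R F\rightarrow Y\otimes_R M$ for finitely presented $F$) is \emph{not} upward monotone in the ambient module: a witness $F$ for a tuple $\bar x$ inside a submodule $N\leq M$ controls $\ker\big(Y\otimes_R F\rightarrow Y\otimes_R N\big)$, which can be strictly smaller than $\ker\big(Y\otimes_R F\rightarrow Y\otimes_R M\big)$ because $Y\otimes_R N\rightarrow Y\otimes_R M$ need not be injective. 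This is precisely why the dense-system characterization of \cite[Theorem 2.6]{HerberaTrlifaj} carries condition (2) of the definition --- closure under countable well ordered ascending chains --- a hypothesis your argument never invokes.

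That the omission is fatal and not merely cosmetic is shown by $R=\mathbb Z$, $M=\mathbb Q$, $\mathcal Y=\modr$: every finite subset of $\mathbb Q$ lies in a cyclic, hence free, hence Mittag-Leffler submodule, so your argument would conclude that $\mathbb Q$ is Mittag-Leffler; but $\mathbb Q$ is countably presented, flat and not projective, hence not Mittag-Leffler (alternatively, take $Y_i=\mathbb Z/p^i\mathbb Z$: the target $\prod_i\left(Y_i\otimes\mathbb Q\right)$ vanishes while the source does not). A correct argument must use the chain condition: given an element of the kernel of $\left(\prod_iY_i\right)\otimes_R M\rightarrow\prod_i\left(Y_i\otimes_R M\right)$, one builds a countable ascending chain of members of the dense system absorbing, step by step, the finitely many elements of $M$ needed to witness each vanishing, and then uses that the union of the chain is again a member and is $\mathcal Y$-Mittag-Leffler. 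For the record, the paper does not reprove the statement at all --- it simply quotes \cite[Theorem 2.6]{HerberaTrlifaj}, the Remark noting only that the members of the dense system need not be countably generated (a point that does require the small extra argument you dismiss, though it is harmless). Your treatment of flatness for $\mathcal{FM}^{\mathcal Y}$ as a directed union of flat submodules is fine.
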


\section{CALCULATING THE DOUBLE ORTHOGONAL CLASS}
\label{sec:constr-locally-free}


We fix, for the rest of this section, an infinite regular cardinal
$\lambda$, a well ordered system,
$(F_\alpha,g_{\alpha\beta})_{\alpha,\beta < \lambda}$, and an infinite
cardinal $\kappa$ greater or equal than $\lambda$. We shall denote by
$F$ the direct limit of the direct system. Extending the construction
of \cite[\S 3]{SlavikTrlifaj}, we are going to construct an
$\big(\aleph_1,\mathcal G \big)$-free module associated to the direct
system and the cardinal $\kappa$, where
$\mathcal G = \textrm{Sum}(\{F_\alpha:\alpha < \lambda\})$.

We shall denote by $T_\kappa^{<\lambda}$ the set of all sequences of
length smaller than $\lambda$ consisting of elements of $\kappa$, that
is,
\begin{displaymath}
  T_\kappa^{< \lambda} = \{\tau:\mu \rightarrow \kappa | \mu < \lambda\}.
\end{displaymath}
Then $T_\kappa^{<\lambda}$ is a tree whose set of branches,
$\br(T_\kappa^{<\lambda})$, can be identified with the set of all
sequences $\nu:\lambda \rightarrow \kappa$. Given any
$\tau:\mu \rightarrow \kappa$, with $\mu < \kappa$, we shall denote by $\ell(\tau)$ its
length; that is, $\ell(\tau) = \mu$.

For each $\tau \in T_\kappa^{<\lambda}$, denote by
$D_\tau = F_{\ell(\tau)}$ and let
$P = \prod_{\tau \in T_\kappa^{<\lambda}}D_\tau$. Given
$\nu \in \br(T_\kappa^{<\lambda})$ and $\alpha < \lambda$, we define
the morphism $d_{\nu\alpha}:F_\alpha \rightarrow P$ as follows: for
any $x \in F_\alpha$, let $d_{\nu\alpha}(x)$ be the element of $P$
such that:
\begin{itemize}
\item $\big[d_{\nu\alpha}(x)\big](\nu \rest \alpha) = x$;

\item
  $\big[d_{\nu\alpha}(x)\big](\nu \rest \beta) =
  g_{\alpha\beta}(x)$, for each $\beta < \lambda$ with
  $\beta > \alpha$, and

\item $\big[d_{\nu\alpha}(x)\big](\varepsilon) = 0$, for each
  $\varepsilon \in T_\kappa^{<\lambda}$ with
  $\varepsilon \neq \nu \rest \gamma$ for each
  $\alpha \leq \gamma < \lambda$.
\end{itemize}

Denote by $Y_{\nu\alpha}$ the image of $d_{\nu \alpha}$ and note
that, since $d_{\nu\alpha}$ is monic,
$Y_{\nu\alpha} \cong F_\alpha$. Moreover, let
$X_{\nu\alpha} = \sum_{\gamma < \alpha}Y_{\nu\gamma}$. Finally, set
$X_\nu = \sum_{\alpha < \lambda}X_{\nu\alpha}$,
$L = \sum_{\nu \in \br(T_\kappa^{<\lambda})}X_\nu$ and
$D = L \cap \prod^\lambda_{\tau \in T_\kappa^{<\lambda}}D_\tau$. The
following two lemmas state the basic properties of these modules. They
can be proved in a similar fashion to \cite[Lemma 3.5 and
3.6]{SlavikTrlifaj}.

\begin{lemma}
  \begin{enumerate}
  \item For each $\nu \in \br(T_\kappa^{<\lambda})$,
    $X_\nu = \bigoplus_{\alpha < \lambda}Y_{\nu\alpha} \cong \bigoplus_{\alpha < \lambda}F_\alpha$.

  \item $\frac{L}{D} \cong F^{(\br(T_\kappa^{<\lambda}))}$.

  \item $L$ is $(\aleph_1,\mathcal G)$-free, where
    $\mathcal G = \textrm{Sum}(\{F_\alpha:\alpha < \lambda\})$.
  \end{enumerate}
\end{lemma}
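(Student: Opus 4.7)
\textbf{Part (1)} follows by the standard lowest-coordinate trick. For fixed $\nu \in \br(T_\kappa^{<\lambda})$, if $\sum_{i=1}^k d_{\nu\alpha_i}(x_i) = 0$ with $\alpha_1 < \cdots < \alpha_k$ and all $x_i \neq 0$, then evaluating at the coordinate $\nu\rest\alpha_1$ isolates the first summand: for $i \geq 2$ the element $d_{\nu\alpha_i}(x_i)$ is supported on $\{\nu\rest\gamma : \gamma \geq \alpha_i > \alpha_1\}$ and contributes $0$. Hence $x_1 = 0$, a contradiction. Together with the monicity of each $d_{\nu\alpha}$, this gives $X_\nu = \bigoplus_{\alpha<\lambda}Y_{\nu\alpha} \cong \bigoplus_{\alpha<\lambda} F_\alpha$. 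For \textbf{part (2)}, I would construct $\phi : L \to F^{(\br(T_\kappa^{<\lambda}))}$ by setting $\phi(z)_\nu$ to be the common image in $F$ of $z(\nu\rest\gamma)$ for sufficiently large $\gamma$. Writing $z = \sum_i d_{\nu_i\alpha_i}(x_i)$, for $\gamma$ past every $\alpha_i$ and every divergence level between $\nu$ and those $\nu_i \neq \nu$, one has $z(\nu\rest\gamma) = \sum_{\nu_i = \nu} g_{\alpha_i\gamma}(x_i)$, whose image in $F$ stabilizes to $\sum_{\nu_i=\nu}[x_i]_F$; this makes $\phi$ well-defined, linear, finitely-supported on the target, and surjective (since $d_{\nu\alpha}(x) \mapsto [x]_F \cdot e_\nu$). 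The equality $\ker\phi = D$ comes from the regularity of $\lambda$: if $z \in D$ then each branch meets $\supp(z)$ in a bounded subset of $\lambda$, so $\phi(z)_\nu = 0$; conversely, if $\phi(z) = 0$ then for the finitely many branches appearing in $z$ one finds a common level past which $z$ vanishes, bounding $|\supp(z)| < \lambda$.

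\textbf{Part (3)} is the main obstacle. I would propose the dense system
\[
  \mathcal S = \{L_C : C \subseteq \br(T_\kappa^{<\lambda}) \text{ countable}\}, \qquad L_C = \sum_{\nu \in C} X_\nu,
\]
which is directed, closed under countable chains, and dense in $L$ (any countable subset involves only countably many branches). The substantive step is that each $L_C$ lies in $\mathcal G$. Enumerating $C = \{\nu_n : n < \omega\}$, setting $L^{(n)} = \sum_{k<n} X_{\nu_k}$, and letting $\beta_n < \lambda$ be the deepest divergence level of $\nu_n$ from $\{\nu_0, \ldots, \nu_{n-1}\}$ --- realized by some $m < n$ --- one checks the identity
\[
  d_{\nu_n\alpha}(x) = d_{\nu_m\alpha}(x) + d_{\nu_n\beta_n}(g_{\alpha\beta_n}(x)) - d_{\nu_m\beta_n}(g_{\alpha\beta_n}(x)) \qquad (\alpha < \beta_n)
\]
coordinate-by-coordinate. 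This absorbs the ``old'' part of $X_{\nu_n}$ into $L^{(n)}$ plus material concentrated on the tail of $\nu_n$ from level $\beta_n$ onwards. Isolating a genuinely new summand $M_n$ --- a direct sum of copies of $F_\alpha$ indexed by $\alpha$ in an appropriate subset of $[\beta_n, \lambda)$ --- one verifies $L^{(n+1)} = L^{(n)} \oplus M_n$ via a lowest-coordinate argument analogous to (1), applied to the fresh coordinates $\nu_n\rest\gamma$ for $\gamma > \beta_n$. Lemma~\ref{l:DirectSum} then yields $L_C = \bigoplus_n M_n \in \mathcal G$.

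The principal difficulty lies in the direct-summand bookkeeping of part (3): pinning down $M_n$ so that it exactly decouples from $L^{(n)}$ at the ``junction'' level $\beta_n$ where $\nu_n$ and $\nu_m$ coincide on the tree, including the subtle contribution of $Y_{\nu_n\beta_n}$ (whose elements $d_{\nu_n\beta_n}(y)$ with $y \in \ker g_{\beta_n,\beta_n+1}$ already lie in $L^{(n)}$). The approach follows \cite[Lemma 3.6]{SlavikTrlifaj}, but since $\lambda$ may exceed $\aleph_1$, systematic use of the regularity of $\lambda$ is needed throughout to control the sets of divergence levels and support sets arising in the argument.
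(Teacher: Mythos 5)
Your proof is correct and follows essentially the same route as the paper: the lowest\-/coordinate argument for (1), the identification of $L/D$ with $F^{(\br(T_\kappa^{<\lambda}))}$ via eventual values along branches for (2) (the paper packages this as a direct\-/sum decomposition of $L/D$ plus the universal property of the direct limit, but the computation is the same), and for (3) the dense system of countable sums $\sum_{\nu\in C}X_\nu$ decomposed using the divergence levels $\beta_n$ together with Lemma \ref{l:DirectSum}. The one point you leave open, the exact choice of $M_n$, is settled exactly as in the paper by taking the strict tail $M_n=\bigoplus_{\delta>\beta_n}Y_{\nu_n\delta}$ and applying your displayed identity at level $\beta_n+1$ (i.e., for all $\alpha\leq\beta_n$), which absorbs the problematic summand $Y_{\nu_n\beta_n}$ into $L^{(n)}+Y_{\nu_n,\beta_n+1}$.
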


\begin{proof}
  (1) By definition,
  $X_{\nu \alpha+1} = X_{\nu \alpha} + Y_{\nu \alpha}$ for each
  $\alpha < \lambda$. We claim that
  this sum is direct. Then the result follows from Lemma
  \ref{l:DirectSum}, since
  $X_\nu = \bigcup_{\alpha < \lambda}X_{\nu\alpha}$.

  Let $\alpha < \lambda$ and $x \in X_{\nu \alpha} \cap Y_{\nu
    \alpha}$. Since $x \in X_{\nu \alpha}$, there exist a finite
  sequence of ordinals smaller than $\alpha$, $\gamma_1, \ldots,
  \gamma_n$ and elements $x_i \in F_{\gamma_i}$ for each $i \in \{1,
  \ldots, n\}$ such that $x = \sum_{i = 1}^n
  d_{\nu\gamma_i}(x_i)$. Then, the element in position $\nu \rest
  \gamma_1$ of $x$ is $x_1$. Since $x \in Y_{\nu\alpha}$, this element has to
  be zero. Thus $x=\sum_{i=2}^nd_{\nu\gamma_i}(x_i)$. Proceeding
  recursively in this way, we can prove that $x_2=x_3 = \cdots = x_n =
  0$ and, consequently, $x=0$.

  (2) First of all, we prove that $\frac{L}{D} = \bigoplus_{\nu \in
    \br(T_\kappa^\lambda)}\frac{X_\nu+D}{D}$. We only have to see
  that the family $\left\{\frac{X_\nu+D}{D}:\nu \in
    \br(T_\kappa^{<\lambda})\right\}$ is independent. Let
  $\nu_0,\nu_1,\ldots,\nu_n \in \br(T_\kappa^{< \lambda})$ be
  distinct sequences. Then there exist $\alpha < \lambda$ such that $\nu_i
  \rest \gamma \neq \nu_j \rest \gamma$ for each $i,j \in \{0,\ldots,
  n\}$ distinct and $\gamma \geq \alpha$. Let $x \in \frac{X_{\nu_0}+D}{D} \cap
  \sum_{i=1}^n\frac{X_{\nu_i}+D}{D}$ and let $y \in X_{\nu_0}$ and
  $z \in \sum_{i =1}^nX_{\nu_i}$ be such that $x = y +D = z+D$. Then
  $y-z \in D$. Since $(y-z)(\nu_0 \rest \gamma) = y(\nu_0 \rest
  \gamma)$ for each $\gamma \geq \alpha$, there exists $\beta < \lambda$ such that $y(\nu_0 \rest
  \gamma) = 0$ for each $\gamma \geq \beta$. This means that $y \in D$
  and, consequently $x = y+D = 0$.

  Now we prove that for each $\nu \in \br(T_\kappa^\lambda)$, $F \cong
  \frac{X_\nu+D}{D}$. Define, for each
  $\alpha < \lambda$, $f_\alpha:F_\alpha \rightarrow
  \frac{X_\nu+D}{D}$ by $f_\alpha (x) = d_{\nu\alpha}(x)+D$ for each
  $x \in F_\alpha$. Then the family $\{f_\alpha:F_\alpha \rightarrow
  \frac{X_\nu+D}{D}\}_{\alpha \in \lambda}$ is a direct system of
  morphisms from $(F_\alpha,g_{\alpha\beta})_{\alpha,\beta < \lambda}$
  since, for each pair of ordinals $\alpha < \beta$, $f_\beta
  g_{\alpha\beta}(x) - f_\alpha (x)$ has support smaller than
  $\lambda$ and, consequently, belongs to $D$. By the universal
  property of the direct limit, the direct system of morphisms, $\{f_\alpha:F_\alpha
  \rightarrow \frac{X_\nu+D}{D}\}_{\alpha \in \lambda}$, induces a
  homomorphism $f:F \rightarrow \frac{X_\nu+D}{D}$ that is easily
  checked to be injective and which is onto by the definition of $X_\nu$.

The conclusion of the proof is that
  \begin{displaymath}
    \frac{L}{D} = \bigoplus_{\nu \in
      \br(T_\kappa^\lambda)}\frac{X_\nu+D}{D} \cong
    F^{(\br(T_\kappa^{<\lambda}))}.
  \end{displaymath}
and we are done.

  (3) Consider the family of submodules
  \[\mathcal S = \{\sum_{\nu \in \Gamma}X_\nu: \Gamma \subseteq
  \br(T_\kappa^{< \lambda}) \textrm{ with } |\Gamma| \textrm{
    countable}\}.\] We are going to see that $\mathcal S$ is a
  $(\aleph_1,\mathcal G)$-dense system.

  Clearly, each countable subset of $L$ is contained in some submodule
  of $\mathcal S$; and $\mathcal S$ is closed under unions of
  countable well ordered chains. 

It remains to prove that each module
  in $\mathcal S$ belongs to $\mathcal G$, i. e., that is a direct sum
  of modules in $\{F_\alpha:\alpha < \lambda\}$. First of all observe
  that, by 1.1, $X_\nu$ can be written as a direct sum of $Y's$ for
  each $\nu \in \textrm{Br}(T_\kappa^{<\lambda})$. Take $\Gamma
  \subseteq \br(T_\kappa^{< \lambda})$ countable, write $\Gamma =
  \{\nu_n:n < \omega\}$ and denote $Z=\sum_{n <
    \omega}X_{\nu_n}$. Define $Z_n = \sum_{m \leq n}X_{\nu_m}$ for each
  $n < \omega$. We claim that, for each $n < \omega$, there exists an
  ordinal $\beta_n < \lambda$ such that $Z_{n+1} = Z_n \bigoplus
  \left(\bigoplus_{\delta > \beta_n}Y_{\nu_{n+1}\delta}\right)$.

  Let $n < \omega$. It is easy to find an ordinal $\beta_n < \lambda$
  and a natural number $k < n+1$ satifying:
  \begin{itemize}
  \item $\nu_{n+1}\rest \beta_n = \nu_k \rest \beta_n$,

  \item and $\nu_{n+1} \rest \alpha \neq \nu_{m} \rest \alpha$ for
    each $\alpha > \beta_n$ and $m < n+1$.
  \end{itemize}
  In order to see that $Z_{n+1} = Z_n \bigoplus
  \left(\bigoplus_{\delta > \beta_n}Y_{\nu_{n+1}\delta}\right)$,
  first note that $Z_n \cap X_{\nu_{n+1}} \leq \prod_{\delta \leq
    \beta_n}D_{\nu_{n+1} \rest \delta}$ by election of $\beta_n$, and
  that $\bigoplus_{\delta > \beta_n}Y_{\nu_{n+1}\delta} \leq
  \prod_{\delta > \beta_n}D_{\nu_{n+1}\rest \delta}$; this implies
  that $Z_n \cap \left(\bigoplus_{\delta >
      \beta_n}Y_{\nu_{n+1}\delta}\right) = 0$.  In order to see that
  $Z_{n+1} = Z_n + \left(\bigoplus_{\delta >
      \beta_n}Y_{\nu_{n+1}\delta}\right)$, we only have to prove that
  $X_{\nu_{n+1}} \leq Z_n + \left(\bigoplus_{\delta >
      \beta_n}Y_{\nu_{n+1}\delta}\right)$. Given $\gamma < \lambda$
  and $x \in F_\gamma$, $d_{\nu_{n+1} \gamma} (x)$ trivially belongs
  to $Z_n + \left(\bigoplus_{\delta >
      \beta_n}Y_{\nu_{n+1}\delta}\right)$ if $\gamma > \beta_n$. If
  $\gamma \leq \beta_n$ then, since $\nu_{n+1}\rest \beta_n =
  \nu_k\rest \beta_n$, we have that
  \[d_{\nu_{n+1} \gamma} (x) - d_{\nu_{n+1}
    \beta_n+1}(g_{\gamma\beta_n+1}(x)) =
  d_{\nu_k\gamma}(x)-d_{\nu_k\beta_n+1}(g_{\gamma\beta_n+1}(x)),\]
  which implies that $d_{\nu_{n+1} \gamma} (x)$ belongs to $Z_n +
  \left(\bigoplus_{\delta > \beta_n}Y_{\nu_{n+1}\delta}\right)$. This
  proves the claim.

  Now, using the claim and Lemma \ref{l:DirectSum} we get that
  \begin{displaymath}
    Z = \bigoplus_{n < \omega}\left(\bigoplus_{\delta >
        \beta_{n-1}}Y_{\nu_{n}\delta}\right)
  \end{displaymath}
  where $\beta_{n-1} < \lambda$. This finishes the proof.
\end{proof}

\begin{remark}
  We shall call $L$ the $\aleph_1$-free module associated to the
  direct system $(F_\alpha,g_{\alpha\beta})_{\alpha<\beta< \lambda}$
  and the infinite cardinal $\kappa$.
\end{remark}

Let $\mathcal X$ be a class of left $R$-modules. We are
going to prove that if $\mathcal X$ is closed under direct sums and
$\aleph_1$-free modules, then ${^\perp}(\mathcal X^\perp)$ contains
all totally ordered direct limits of modules in $\mathcal X$. We begin with a
lemma concerning infinite combinatorics:

\begin{lemma}
  Let $\mu$ be an infinite cardinal and $\xi$ an infinite regular
  cardinal. Then there exists a cardinal $\kappa$ such that
  $\kappa > \mu$, $\kappa^\eta = \kappa$ for each cardinal
  $\eta < \xi$ and $\kappa^\xi = 2^\kappa$.
\end{lemma}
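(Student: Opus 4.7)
The plan is to construct $\kappa$ as a strong limit cardinal of cofinality exactly $\xi$ that lies above $\mu$. Once such a $\kappa$ is in hand, both desired equalities are essentially immediate from standard cardinal arithmetic: strong-limitness plus $\cf(\kappa) > \eta$ forces $\kappa^\eta = \kappa$, while the factorization of $\kappa$ as a union of an increasing $\xi$-sequence combined with K\"onig's theorem forces $\kappa^{\cf(\kappa)} = 2^\kappa$.

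First I would define a continuous strictly increasing sequence of cardinals $(\lambda_\alpha)_{\alpha \leq \xi}$ recursively by $\lambda_0 = \mu^+$, $\lambda_{\alpha+1} = 2^{\lambda_\alpha}$, and $\lambda_\beta = \sup_{\alpha < \beta}\lambda_\alpha$ for $\beta$ a limit ordinal. Set $\kappa = \lambda_\xi$. Since $\xi$ is a limit ordinal (being an infinite regular cardinal), $\kappa$ is the supremum of the strictly increasing sequence $(\lambda_\alpha)_{\alpha < \xi}$, so $\cf(\kappa) \leq \xi$; and since $\xi$ is regular and the sequence is strictly increasing, any cofinal subset of $\kappa$ of order type less than $\xi$ would pull back to a cofinal subset of $\xi$ of order type less than $\xi$, which is impossible. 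Thus $\cf(\kappa) = \xi$, and clearly $\kappa > \mu$. Moreover, an easy induction on $\alpha < \xi$ shows $2^{\lambda_\alpha} = \lambda_{\alpha+1} < \kappa$, so $\kappa$ is a strong limit.

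Now I would compute the cardinal exponentials. For $\eta < \xi = \cf(\kappa)$, every function $\eta \to \kappa$ has bounded range, so ${}^\eta\kappa = \bigcup_{\rho < \kappa}{}^\eta \rho$ and hence $\kappa^\eta \leq \sum_{\rho<\kappa}\rho^\eta$. Since $\kappa$ is a strong limit and $\eta < \kappa$, for each $\rho < \kappa$ we have $\rho^\eta \leq 2^{\max(\rho,\eta)} < \kappa$, giving $\kappa^\eta \leq \kappa \cdot \kappa = \kappa$; the reverse inequality is trivial. For $\eta = \xi$, partition $\kappa$ as $\bigsqcup_{\alpha < \xi} A_\alpha$ with $|A_\alpha| \leq \lambda_\alpha$. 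Then
\begin{displaymath}
  2^\kappa = \prod_{\alpha < \xi}2^{|A_\alpha|} \leq \prod_{\alpha < \xi}2^{\lambda_\alpha} = \prod_{\alpha < \xi}\lambda_{\alpha+1} \leq \prod_{\alpha < \xi}\kappa = \kappa^\xi,
\end{displaymath}
while $\kappa^\xi \leq (2^\kappa)^\xi = 2^{\kappa \cdot \xi} = 2^\kappa$ because $\xi \leq \kappa$. Hence $\kappa^\xi = 2^\kappa$, finishing the proof.

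The construction is entirely routine once one recognizes the right shape for $\kappa$; no step looks delicate. The only point to be careful about is justifying $\cf(\kappa) = \xi$ (using regularity of $\xi$) and verifying that each $\lambda_{\alpha+1}$ stays strictly below $\kappa$ for $\alpha+1 < \xi$, which is what makes $\kappa$ a strong limit and drives both exponential identities.
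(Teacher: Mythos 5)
Your construction is exactly the one in the paper: $\kappa$ is the supremum of the length-$\xi$ tower obtained by iterating the power set operation starting above $\mu$, with suprema at limits. The paper simply asserts that this $\kappa$ works, whereas you carry out the (correct) verification of $\cf(\kappa)=\xi$, strong-limitness, and the two exponential identities; so this is the same approach, just with the omitted details supplied.
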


\begin{proof}
  Simply take $\kappa$ to be the suppremum of the increasing sequence
  of cardinals $\{\beta_\alpha:\alpha < \xi\}$ defined recursively as
  follows: $\beta_0 = \mu$; $\beta_{\alpha+1} = 2^{\beta_\alpha}$ for
  each $\alpha < \xi$, and
  $\beta_\alpha = \sup\{\beta_\gamma:\gamma < \alpha\}$ for each
  $\beta < \xi$ limit.
\end{proof}

Now, reasoning as in the last part of the proof of \cite[Theorem
6]{BazzoniStovicek2} we get:

\begin{theorem}\label{t:DoublePerp}
  Let $\mathcal X$ be a class of modules closed under direct sums and
  $\aleph_1$-free modules. Then the class ${^\perp}(\mathcal X^\perp)$ contains
  all totally ordered direct limits of modules in $\mathcal X$.
\end{theorem}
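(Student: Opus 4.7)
The plan is to adapt the final cardinality argument from \cite[Theorem 6]{BazzoniStovicek2}, applied to the $\aleph_1$-free module $L$ constructed above. First, since any totally ordered set admits a cofinal well ordered subset of order type its cofinality, and passing to a cofinal subsystem does not change the direct limit, I may assume $F = \lim_\to F_\alpha$ is a well ordered direct limit indexed by an infinite regular cardinal $\lambda$, with each $F_\alpha \in \mathcal X$. Fix $N \in \mathcal X^\perp$; the goal is $\ext^1_R(F, N) = 0$. Set $\mu = \max\{|N|, \sup_{\alpha<\lambda}|F_\alpha|, \lambda, |R|\}$ and apply the previous lemma with this $\mu$ and $\xi = \lambda$ to obtain a cardinal $\kappa > \mu$ satisfying $\kappa^\eta = \kappa$ for every $\eta < \lambda$ and $\kappa^\lambda = 2^\kappa$.

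Next, build the $\aleph_1$-free module $L$ associated to $(F_\alpha, g_{\alpha\beta})_{\alpha,\beta<\lambda}$ and $\kappa$ as in the construction of this section. By part (3) of the earlier lemma, $L$ is $(\aleph_1, \mathcal G)$-free, where $\mathcal G = \textrm{Sum}(\{F_\alpha : \alpha < \lambda\})$. Since $\mathcal X$ is closed under direct sums and each $F_\alpha$ lies in $\mathcal X$, we have $\mathcal G \subseteq \mathcal X$, so $L$ is $(\aleph_1, \mathcal X)$-free, and the closure hypothesis on $\mathcal X$ yields $L \in \mathcal X$; in particular $\ext^1_R(L, N) = 0$. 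By part (2) of the same lemma there is a short exact sequence $0 \to D \to L \to F^{(\br(T_\kappa^{<\lambda}))} \to 0$, and applying $\hom_R(-, N)$, together with $|\br(T_\kappa^{<\lambda})| = \kappa^\lambda = 2^\kappa$ and the natural isomorphism $\ext^1_R(F^{(2^\kappa)}, N) \cong \ext^1_R(F, N)^{2^\kappa}$, produces an exact sequence
\begin{displaymath}
  \hom_R(L, N) \longrightarrow \hom_R(D, N) \longrightarrow \ext^1_R(F, N)^{2^\kappa} \longrightarrow 0.
\end{displaymath}

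The last step is a cardinality count. Since $|T_\kappa^{<\lambda}| \leq \sup_{\eta<\lambda}\kappa^\eta = \kappa$ and $D$ consists of tuples with support of size $<\lambda$ whose entries come from modules of size at most $\mu < \kappa$, a routine estimate gives $|D| \leq \kappa$, hence $|\hom_R(D, N)| \leq |N|^{|D|} \leq \mu^\kappa \leq \kappa^\kappa = 2^\kappa$. If $\ext^1_R(F, N) \neq 0$, then $\ext^1_R(F, N)^{2^\kappa}$ has cardinality at least $2^{2^\kappa} > 2^\kappa$, contradicting that it is a quotient of a set of cardinality at most $2^\kappa$. Therefore $\ext^1_R(F, N) = 0$, as required. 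The main obstacle is choosing $\kappa$ so as to satisfy simultaneously $\kappa^{<\lambda} = \kappa$, needed to keep $|D| \leq \kappa$, and $\kappa^\lambda = 2^\kappa$, needed to make the branch index set strictly larger than $\kappa$; this delicate balance is precisely what the combinatorial lemma supplies.
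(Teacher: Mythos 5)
Your proof is correct and follows essentially the same route as the paper: reduce to a well ordered system indexed by a regular cardinal, pick $\kappa$ via the combinatorial lemma, use the short exact sequence $0 \to D \to L \to F^{(\br(T_\kappa^{<\lambda}))} \to 0$ with $L \in \mathcal X$, and derive the cardinality contradiction $2^{2^\kappa} \leq 2^\kappa$. The only (immaterial) differences are that you argue directly rather than by contradiction with a fixed $X$, choose $\kappa > |N|$ instead of $2^\kappa \geq |X|$, and spell out the estimate $|D| \leq \kappa$, which the paper merely asserts.
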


\begin{proof}
  Let $(F_i,f_{ij})_I$ be a direct system of modules in $\mathcal X$
  such that $I$ is totally ordered and denote by $F$ its direct limit. By
  \cite[\S 3, Theorem 36]{Roitman}, there exists a well ordered subset $J$
  of $I$ which is cofinal in $I$. We may assume that $J$ is a
  cardinal, say $\kappa$. Now, if $\cf(\kappa) = \lambda$, there
  exists a cofinal subset $K$ of $I$ of size $\lambda$. In conclusion,
  there exists a well ordered system,
  $(G_\alpha,g_{\alpha\beta})_{\alpha < \beta < \lambda}$, such that
  $G_\alpha \in \mathcal X$ for each $\alpha < \lambda$ and 
  $\displaystyle F=\lim_{\substack{\longrightarrow\\ \alpha <
      \lambda}}G_{\alpha}$.

  Now suppose, in order
  to get a contradiction, that $F \notin {^\perp}(X^\perp)$. Then
  there exists $X \in \mathcal X^\perp$ with $\ext_R^1(F,X) \neq 0$. Take, using the
  preceding lemma, an infinite regular cardinal $\kappa$ satisfying
  $|X| \leq 2^\kappa$, $\kappa^\eta = \kappa$ for each cardinal $\eta
  < \lambda$ and $\kappa^\lambda = 2^\kappa$. Let $L$ be the locally
  $\aleph_1$-free module associated to the direct system
  $(G_\alpha,g_{\alpha\beta})_{\alpha < \beta < \lambda}$ and
  $\kappa$. Since $\mathcal X$ is closed under direct sums and
  $\aleph_1$-free modules, $L$ belongs to $\mathcal X$. Applying $\ext_R^1(\_,X)$ to the short exact sequence
  \begin{displaymath}
    0 \mapright D \mapright L \mapright F^{(\br(T_\kappa^{<\lambda}))}
    \mapright 0
  \end{displaymath}
  we get an epimorphism $\hom(D,X) \rightarrow
  \ext_R^1(F^{(\br(T_\kappa^{<\lambda}))},X)$, which implies that
  $|\hom(D,X)| \leq |\ext_R^1(F^{(\br(T_\kappa^{<\lambda}))},X)|$. But this
  cannot be true since $|\hom(D,X)| \leq 2^\kappa$ (as $|D| = \kappa$)
  and $|\ext_R^1(F^{(\br(T_\kappa^{<\lambda}))},X)| \geq 2^{2^\kappa}$.
\end{proof}

If we specialize Theorem \ref{t:DoublePerp} to the class of flat a
Mittag-Leffler modules, we obtain the following extension of \cite[Theorem
6]{BazzoniStovicek2}.

\begin{corollary}
  Let $\mathcal Y$ be any class of right $R$-modules. Then the class
  ${^\perp}(\mathcal {M^{\mathcal Y}}^{\perp})$
  (resp. ${^\perp}(\mathcal{FM^{\mathcal Y}}^{\perp})$) contains all totally
  ordered direct limits of modules belonging to $\mathcal M^{\mathcal Y}$
  (resp. $\mathcal{FM}^{\mathcal Y}$).
\end{corollary}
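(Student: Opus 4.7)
The plan is to obtain the corollary as a direct application of Theorem \ref{t:DoublePerp} to the two classes $\mathcal X = \mathcal M^{\mathcal Y}$ and $\mathcal X = \mathcal{FM}^{\mathcal Y}$. For that theorem to apply, I need to verify the two closure hypotheses — closure under direct sums and closure under $\aleph_1$-free modules — for each of the two classes.

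Closure under $\aleph_1$-free modules is already available to me: this is precisely the content of Theorem \ref{t:MittagLefflerCharacterization}, which states that $\mathcal M^{\mathcal Y}$ and $\mathcal{FM}^{\mathcal Y}$ are closed under $\aleph_1$-free modules. So this hypothesis requires no additional work.

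For closure under direct sums, I would argue as follows. Given a family $\{M_j : j \in J\}$ of $\mathcal Y$-Mittag-Leffler modules and a family $\{Y_i : i \in I\}$ of modules in $\mathcal Y$, the canonical morphism $\big(\prod_{i \in I} Y_i\big) \otimes_R \big(\bigoplus_{j \in J} M_j\big) \to \prod_{i \in I}\big(Y_i \otimes_R \bigoplus_{j \in J} M_j\big)$ decomposes, via the natural isomorphisms commuting tensor product with direct sum on each side, as the direct sum over $j \in J$ of the canonical morphisms $(\prod_{i \in I} Y_i) \otimes_R M_j \to \prod_{i \in I}(Y_i \otimes_R M_j)$. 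Each of these is injective by hypothesis, hence so is their direct sum; thus $\bigoplus_{j \in J} M_j$ is again $\mathcal Y$-Mittag-Leffler. Since direct sums of flat modules are flat, the same argument shows that $\mathcal{FM}^{\mathcal Y}$ is closed under direct sums as well.

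With both hypotheses verified, Theorem \ref{t:DoublePerp} applied to $\mathcal M^{\mathcal Y}$ yields that ${}^\perp(\mathcal{M^{\mathcal Y}}{}^\perp)$ contains all totally ordered direct limits of modules in $\mathcal M^{\mathcal Y}$, and the same theorem applied to $\mathcal{FM}^{\mathcal Y}$ gives the parenthetical statement. There is no real obstacle here; the only mildly subtle point is the direct sum closure for $\mathcal Y$-Mittag-Leffler modules, and even that is a formal consequence of bilinearity of tensor product and the fact that injectivity is preserved by direct sums of maps.
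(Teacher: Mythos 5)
Your proof is correct and takes essentially the same route as the paper, which likewise deduces the corollary directly from Theorems \ref{t:DoublePerp} and \ref{t:MittagLefflerCharacterization} (leaving the closure of $\mathcal M^{\mathcal Y}$ and $\mathcal{FM}^{\mathcal Y}$ under direct sums implicit, as it is standard). One small imprecision in your direct-sum argument: the canonical map for $\bigoplus_{j}M_j$ is not literally the direct sum of the canonical maps for the $M_j$, because the target $\prod_{i}\bigoplus_{j}(Y_i\otimes_R M_j)$ is strictly larger than $\bigoplus_{j}\prod_{i}(Y_i\otimes_R M_j)$ in general; rather, it is that direct sum of injections followed by the canonical embedding of $\bigoplus_{j}\prod_{i}(Y_i\otimes_R M_j)$ into $\prod_{i}\bigoplus_{j}(Y_i\otimes_R M_j)$, which is still injective, so the conclusion stands.
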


\begin{proof}
Follows from theorems \ref{t:DoublePerp} and \ref{t:MittagLefflerCharacterization}.
\end{proof}

Finally, we describe the cotorsion pair generated by the class of all
flat Mittag-Leffler modules for those rings in which all flat modules
are filtered by totally ordered direct limits of projective modules. Of
course, this
class of rings contains all rings in which each flat module is a
totally ordered direct limit of projective modules. Moreover, left perfect
rings, discrete valuation rings and countable rings belong to this
class, see Example \ref{e:example2}. Section 3 is devoted to study
this class of rings.

\begin{corollary} \label{c:CotorsionPairs} Suppose that each flat
  left $R$-module is filtered by totally ordered direct limits of projective
  modules. Then the cotorsion pair generated by $\mathcal{FM}$ is the
  Enochs cotorsion pair $(\mathcal F,\mathcal C)$, where $\mathcal F$
  is the class of all flat modules and $\mathcal C$ is the class of
  all cotorsion modules.
\end{corollary}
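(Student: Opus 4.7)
The plan is to show that the two cotorsion pairs coincide by proving that they have the same right class, i.e. $\mathcal{FM}^\perp = \mathcal C$. Since cotorsion pairs are determined by either of their classes, this will force equality of the left classes as well.

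First I would observe the trivial inclusion $\mathcal C \subseteq \mathcal{FM}^\perp$: as $\mathcal{FM} \subseteq \mathcal F$, orthogonality reverses the inclusion. The substance of the proof is the reverse inclusion $\mathcal{FM}^\perp \subseteq \mathcal C$, which amounts to showing that every flat module $F$ satisfies $F \in {^\perp}(\mathcal{FM}^\perp)$.

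To achieve this, I would combine the hypothesis with Theorem \ref{t:DoublePerp} and Eklof's lemma. By Theorem \ref{t:MittagLefflerCharacterization}, $\mathcal{FM}$ is closed under $\aleph_1$-free modules, and it is obviously closed under direct sums (flatness and $\mathcal Y$-Mittag-Lefflerness are preserved under arbitrary direct sums). Since every projective module is flat and Mittag-Leffler (projectives are direct summands of free modules, hence pure-projective, hence Mittag-Leffler), the class of projective modules is contained in $\mathcal{FM}$. Theorem \ref{t:DoublePerp} applied to $\mathcal X = \mathcal{FM}$ then gives that every totally ordered direct limit of projective modules belongs to ${^\perp}(\mathcal{FM}^\perp)$.

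Now the hypothesis on $R$ says that any flat module $F$ admits a filtration whose factors are totally ordered direct limits of projectives, and by the previous step each of these factors lies in ${^\perp}(\mathcal{FM}^\perp)$. By Eklof's lemma, the left-hand class of a cotorsion pair is closed under transfinite extensions, so $F \in {^\perp}(\mathcal{FM}^\perp)$. Hence $\mathrm{Ext}^1_R(F,X)=0$ for every $X \in \mathcal{FM}^\perp$, giving $\mathcal{FM}^\perp \subseteq \mathcal F^\perp = \mathcal C$. Equality of the right classes yields equality of the cotorsion pairs, and in particular ${^\perp}(\mathcal{FM}^\perp)=\mathcal F$.

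The main conceptual step, and essentially the only nontrivial one, is the application of Theorem \ref{t:DoublePerp} to bring totally ordered direct limits of projectives inside ${^\perp}(\mathcal{FM}^\perp)$; once this is in hand, Eklof's lemma and the hypothesis do the rest in a routine manner.
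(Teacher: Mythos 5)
Your proof is correct and follows essentially the same route as the paper: Theorem \ref{t:DoublePerp} together with Theorem \ref{t:MittagLefflerCharacterization} puts totally ordered direct limits of projectives into ${^\perp}(\mathcal{FM}^\perp)$, and Eklof's lemma plus the hypothesis then gives $\mathcal F \subseteq {^\perp}(\mathcal{FM}^\perp)$. The only (immaterial) difference is that you conclude by comparing the right-hand classes, whereas the paper compares the left-hand classes directly.
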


\begin{proof}
  By the preceding theorem and Theorem
  \ref{t:MittagLefflerCharacterization}, each totally ordered direct limit of
  projective modules belongs to ${^\perp}(\mathcal{FM}^\perp)$. Since
  ${^\perp}(\mathcal{FM}^\perp)$ is closed under filtrations by
  \cite[Theorem 1.2]{Eklof}, the hypothesis implies that $\mathcal F
  \subseteq {^\perp}(\mathcal{FM}^\perp)$. But this means that
  $\mathcal F={^\perp}(\mathcal{FM}^\perp)$.
\end{proof}

Theorem \ref{t:DoublePerp} is more general and allows us to prove the
analogous of the preceeding result for Mittag-Leffler modules. In this
case, we determine the class ${^\perp}(\mathcal M^\perp)$ for rings in
which each module is filtered by totally ordered direct limits of pure
projective modules.

\begin{corollary}
  Suppose that each left $R$-module is filtered by totally ordered direct limits
    of pure projective modules. Then the cotorsion pair generated by
    $\mathcal M$ is $(\rmod,\mathcal I)$, where $\mathcal I$ is the
    class of all injective modules.
\end{corollary}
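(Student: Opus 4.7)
The plan is to imitate the argument of Corollary \ref{c:CotorsionPairs}: I will show that every left $R$-module lies in ${^\perp}(\mathcal M^\perp)$, so the cotorsion pair generated by $\mathcal M$ collapses to $(\rmod,\rmod^\perp)=(\rmod,\mathcal I)$.

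The key preliminary observation I need is that every pure projective module belongs to $\mathcal M$. Finitely presented modules are trivially Mittag-Leffler because the canonical map $(\prod_i N_i)\otimes_R M\to \prod_i(N_i\otimes_R M)$ is an isomorphism whenever $M$ is finitely presented. Moreover $\mathcal M$ is closed under arbitrary direct sums: the tensor product and the canonical map commute with direct sums in the right argument, and a direct sum of monomorphisms is a monomorphism, while the natural embedding $\bigoplus_j\prod_i(N_i\otimes M_j)\hookrightarrow\prod_i\bigoplus_j(N_i\otimes M_j)$ is injective. The class is also clearly closed under direct summands. Since every pure projective module is a summand of a direct sum of finitely presented modules, it is therefore Mittag-Leffler.

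Given this, I would apply Theorem \ref{t:DoublePerp} to $\mathcal X=\mathcal M$: by Theorem \ref{t:MittagLefflerCharacterization} this class is closed under $\aleph_1$-free modules, and by the previous paragraph it is closed under direct sums. Hence ${^\perp}(\mathcal M^\perp)$ contains every totally ordered direct limit of Mittag-Leffler modules, and in particular every totally ordered direct limit of pure projective modules.

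Finally I would invoke Eklof's lemma (\cite[Theorem 1.2]{Eklof}), which guarantees that ${^\perp}(\mathcal M^\perp)$ is closed under transfinite filtrations. Combined with the hypothesis that every left $R$-module is filtered by totally ordered direct limits of pure projective modules, this gives $\rmod\subseteq {^\perp}(\mathcal M^\perp)$, hence equality, and therefore $\mathcal M^\perp=\rmod^\perp=\mathcal I$. The only non-routine checkpoint in this plan is the inclusion of the pure projectives in $\mathcal M$ (which requires the closure of $\mathcal M$ under direct sums, a fact not made explicit earlier in the paper); beyond that the argument is a direct transcription of the proof of Corollary \ref{c:CotorsionPairs}, with ``flat'' replaced by ``arbitrary'' and ``projective'' by ``pure projective''.
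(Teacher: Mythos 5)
Your proof is correct and follows essentially the same route as the paper: apply Theorem \ref{t:DoublePerp} together with Theorem \ref{t:MittagLefflerCharacterization} to place all totally ordered direct limits of pure projective modules in ${^\perp}(\mathcal M^\perp)$, then use Eklof's lemma and the filtration hypothesis to conclude ${^\perp}(\mathcal M^\perp)=\rmod$ and hence $\mathcal M^\perp=\mathcal I$. Your explicit verification that pure projective modules are Mittag-Leffler (finitely presented modules are Mittag-Leffler, and the class is closed under direct sums and summands) is a detail the paper uses implicitly, and your argument for it is sound.
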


\begin{proof}
  Again by the preceding theorem and Theorem
  \ref{t:MittagLefflerCharacterization}, each totally ordered direct limit of
  pure projective modules belongs to ${^\perp}(\mathcal{M}^\perp)$. Since
  ${^\perp}(\mathcal{M}^\perp)$ is closed under filtrations by
  \cite[Theorem 1.2]{Eklof}, the hypothesis implies that $\rmod
  \subseteq {^\perp}(\mathcal{FM}^\perp)$. But this means that
  $\rmod={^\perp}(\mathcal{FM}^\perp)$.
\end{proof}

Left pure semisimple rings trivially satifies the hyphotesis of the
preceeding corollary. Moreover, $\mathbb Z$, by \cite[Corollary
18.4]{Fuchs}, and, more generally, discrete valuation domains satisfy
the hyphotesis of the corollary (this follows form the proof of
Example \ref{e:example2}, where it is
proved that each module over a discrete valuation ring is the union of
a countable chain of modules that are direct sums of cyclics; but
direct sums of cyclics over a discrete valuation ring are
pure-projective). 

\section{TOTALLY ORDERED DIRECT LIMITS OF PROJECTIVE MODULES}
\label{sec:totally-order-direct}

In this section we turn to when all flat modules are filtered by
totally ordered direct limits of projective modules. It is not known which
rings satisfy this property; even, it is not known which rings satisfy
that flat modules are totally ordered direct limits of projectives. We begin
with an example of a flat module which is not a totally ordered direct limit
of projective modules. We shall use the following result which was
communicated by J. \v{S}aroch:

\begin{proposition}\label{p:AlephOmegaFlats}
  Let $M$ be a $\aleph_\omega$-presented flat module which is a
  totally ordered direct limit of projective modules. Then $M$ has finite
  projective dimension.
\end{proposition}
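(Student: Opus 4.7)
The plan is to reduce to a well-ordered direct limit of projectives indexed by $\aleph_n$ for some $n < \omega$, and then to apply an Auslander-style bound on the projective dimension of such limits.

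By Roitman's theorem (as in the proof of Theorem~\ref{t:DoublePerp}), every totally ordered directed set has a well-ordered cofinal subset, so without loss of generality $M = \displaystyle\lim_{\substack{\longrightarrow \\ \alpha < \lambda}} P_\alpha$ with $\lambda$ a regular cardinal and each $P_\alpha$ projective. Since $\aleph_\omega$ is singular and $\lambda$ is regular, either $\lambda = \aleph_n$ for some $n < \omega$, or $\lambda \geq \aleph_{\omega + 1}$. In the latter case, I would use the $\aleph_\omega$-presentability of $M$ to reduce to the former: an $\aleph_\omega$-sized set of generators of $M$ lifts to elements of the $P_\alpha$'s, and since $\cf(\lambda) = \lambda > \aleph_\omega$, those lifts are bounded by some $\beta_0 < \lambda$, giving an epimorphism $P_{\beta_0} \twoheadrightarrow M$. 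Likewise, each of the $\aleph_\omega$ relations in a presentation of $M$ eventually vanishes under some transition map $f_{\beta_0,\gamma}$, and regularity of $\lambda$ furnishes a single bound $\beta_1 < \lambda$ controlling them all. Iterating this bounding step countably many times and using $\cf(\aleph_\omega) = \omega$, one extracts a cofinal subsystem of the original direct system of order type at most $\aleph_\omega$, which, after another application of Roitman, becomes a well-ordered direct limit of length $\aleph_n$ for some $n < \omega$.

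Once in this reduced situation, I would invoke the Auslander-style bound that a well-ordered direct limit of projectives of length $\aleph_n$ has projective dimension at most $n + 1$: the base case $n = 0$ is the classical mapping-telescope short exact sequence $0 \to \bigoplus_{k < \omega} P_k \to \bigoplus_{k < \omega} P_k \to M \to 0$ of length one, and the inductive step combines a telescope at the $\aleph_n$ level with the inductive hypothesis applied to subchains of countable cofinality. This yields $\pd M \leq n + 1 < \infty$.

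The main obstacle is the reduction of $\lambda$ down to $\aleph_n$: while the $\aleph_\omega$ generators of $M$ are easily confined to a bounded initial segment of $\lambda$, the kernel of the surjection $P_{\beta_0} \twoheadrightarrow M$ need not be $\aleph_\omega$-generated, so aligning the $\aleph_\omega$ relations of $M$ with the transition maps of the direct system to produce an honest cofinal subsystem of length at most $\aleph_\omega$ is the delicate set-theoretic and combinatorial part of the argument.
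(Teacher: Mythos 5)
There is a genuine gap in the hard case. Your setup matches the paper's: reduce via Roitman to a well-ordered system of length a regular cardinal $\lambda$, and if $\lambda < \aleph_\omega$ (equivalently $\lambda = \aleph_n$) finish by Osofsky's bound on well-ordered limits of projectives. But when $\lambda > \aleph_\omega$ is regular, the reduction you propose --- ``extract a cofinal subsystem of the original direct system of order type at most $\aleph_\omega$'' --- is impossible as stated: any subset of $\lambda$ of cardinality $\leq \aleph_\omega$ is bounded in $\lambda$ by regularity, hence never cofinal, and the direct limit of a bounded subsystem $(P_{\beta_n})_{n<\omega}$ of the original system is not $M$ but merely a module mapping to $M$. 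You correctly flag the second half of the obstruction yourself: the kernel of $P_{\beta_0} \twoheadrightarrow M$ need not be $\aleph_\omega$-generated (since $P_{\beta_0}$ can be arbitrarily large), so ``the $\aleph_\omega$ relations of $M$'' cannot be located inside the given system and killed by a bounded transition map. Naming the difficulty is not the same as resolving it, and the resolution is the actual content of the proposition.

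The paper's way around this is to change the modules, not just the index set. By Kaplansky's theorem each $P_{\beta}$ is a direct sum of countably generated modules, so one can choose an $\aleph_\omega$-generated direct summand $S_0 \leq P_{\beta_0}$ with $f_{\beta_0}(S_0) = M$; since $M$ is $\aleph_\omega$-presented and $S_0$ is $\aleph_\omega$-generated, $\ker f_{\beta_0} \cap S_0$ \emph{is} $\leq \aleph_\omega$-generated, and its generators can be killed at a single stage $\beta_1 < \lambda$ using $\cf(\lambda) > \aleph_\omega$. Iterating produces a countable chain $\beta_0 < \beta_1 < \cdots$ and $\aleph_\omega$-generated direct summands $S_n \leq P_{\beta_n}$ (still projective) with $f_{\beta_n\beta_{n+1}}(S_n) \leq S_{n+1}$ and $\ker f_{\beta_n\beta_{n+1}} \cap S_n = \ker f_{\beta_n} \cap S_n$; this last condition is what guarantees that the resulting \emph{countable} system $(S_n)_{n<\omega}$ has direct limit exactly $M$, after which Osofsky's theorem applies. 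Your sketch contains neither the passage to direct summands nor the kernel-stabilization condition, and without them the argument does not close.
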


\begin{proof}
  By \cite[\S 3, Theorem 36]{Roitman} there exists a well ordered system of
  projective modules, $(P_\alpha,f_{\alpha\beta})_{\alpha < \beta <
    \kappa}$, whose direct limit is $M$. Denote by $f_\alpha:P_\alpha
  \rightarrow M$ the direct limit canonical map. If $\cf(\kappa) <
  \aleph_\omega$ and $\Gamma$ is a cofinal subset of $\kappa$ of
  cardinality $\cf(\kappa)$, the direct limit of the system
  $(P_\alpha,f_{\alpha\beta})_{\alpha,\beta \in \Gamma}$ is $M$. By
  \cite[Theorem 2.3]{Osofsky}, $M$ has finite projective dimension.

  So suppose that $\cf(\kappa) > \aleph_\omega$. We are going to
  construct a countable chain $\beta_0 < \beta_1 < \cdots$ in $\kappa$
  and $\leq \aleph_\omega$-generated direct summands $S_n$ of
  $P_{\beta_n}$ satisfying $f_{\beta_0}(S_0) = M$,
  $f_{\beta_n\beta_{n+1}}(S_n) \leq S_{n+1}$ and $\ker
  f_{\beta_n\beta_{n+1}} \cap S_n = \ker f_{\beta_n} \cap S_n$ for
  each $n < \omega$. With this construction made, setting $h_{nn+1} =
  f_{\beta_n\beta_{n+1}} \rest S_n$ for each $n < \omega$ (and
  $h_{nm}$ the corresponding compositions for each $n < m < \omega$),
  we get a direct system of projective modules,
  $(S_n,h_{nm})_{n<m<\omega}$, whose direct limit is $M$. Then, again by
  \cite[Theorem 2.3]{Osofsky}, $M$ has finite projective dimension.

  So, it only remains to make the construction; we shall proceed by
  induction on $n$. Suppose that $n=0$. Since $M$ is
  $\aleph_\omega$-generated and $\cf(\kappa) > \aleph_\omega$, there
  exists $\beta_0 < \kappa$ such that $f_{\beta_0}(P_{\beta_0}) =
  M$. Since $P_{\beta_0}$ is a direct sum of countably generated
  modules and $M$ is $\aleph_\omega$-generated, there exists a
  $\aleph_\omega$-generated direct summand $S_0$ of $P_{\beta_0}$ such
  that $f_{\beta_0}(S_0) = M$.

  Now assume that we have made the construction for some $n<\omega$,
  and let us do it for $n+1$. Since $M$ is $\aleph_\omega$-presented
  and $f_{\beta_n}(S_n) = M$, $\ker f_{\beta_n} \cap S_n$ has a
  generating set $X$ of cardinality less or equal than
  $\aleph_\omega$. Now, for each $x \in X$, as $f_{\beta_n}(x) = 0$,
  there exists $\beta_x < \kappa$ with $\beta_x \geq \beta_n$ such that $f_{\beta_n\beta_x}(x) =
  0$. Then, taking $\beta_{n+1}$ the suppremun of $\{\beta_x: x \in
  X\}$, we get an ordinal smaller than $\kappa$ (remember:
  $\cf(\kappa) > \aleph_\omega$) with the property that $\ker
  f_{\beta_n \beta_{n+1}} \cap S_n = \ker f_{\beta_n} \cap
  S_n$. Finally, in order to finish the inductive step, take $S_{n+1}$
  an $\aleph_\omega$-generated direct summand of $P_{\beta_{n+1}}$
  containing $f_{\beta_n\beta_{n+1}}(S_n)$.
\end{proof}

Now we can give the announced example:

\begin{example}\label{e:example}
  Let $R$ be the boolean algebra freely generated (as a boolean
  algebra) by a set of $X$ of cardinality $\aleph_\omega$. Then, by
  \cite[pp. 43]{Sikorski}, $X$ is a nice set of idempotents in the
  sense of \cite{Osofsky1970}. The left ideal $I_X$ generated by $X$
  is flat, as $R$ is von-Neumann regular, $\aleph_\omega$-presented
  (look at the projective presentation of $I_X$ constructed in
  \cite[p. 642]{Osofsky1970}) and has infinite projective dimension by
  \cite[Theorem A]{Osofsky1970}. By the preceding result, $I_X$ cannot
  be a totally ordered direct limit of projective modules.
\end{example}

Now we give examples of rings satisfying that flat modules are
filtered by totally ordered direct limits of projectives.

\begin{examples}\label{e:example2}
  \begin{enumerate}
  \item The following rings satisfy that all flat modules are totally
    ordered direct limits of projective modules:
    \begin{enumerate}
    \item Left perfect rings.

    \item $\mathbb Z$.

    \item Discrete valuation domains.
    \end{enumerate}

  \item If $R$ is countable, the every flat module is filtered by
    totally ordered direct limits of projective modules.
  \end{enumerate}

  As a consequence of Corollary \ref{c:CotorsionPairs}, in each of
  these rings the cotorsion pair generated by the flat and Mittag
  Leffler modules is the Enochs cotorsion pair.
\end{examples}

\begin{proof}
  We begin with the proof of (1). Part (a) is trivial; (b) follows
  from \cite[Corollary 18.4]{Fuchs}. Part (c) can be proved in a
  similar way as \cite[Proposition 18.3 and Corollary 18.4]{Fuchs}
  with some arithmetic modifications; we sketch the proof for
  completeness.

  Suppose that $R$ is a discrete valuation domain with prime element
  $p$. Recall that for each $r \in R$ there exists $n \in \omega$ and
  units $u,v \in R$ such that $r = up^n = p^nv$; consequently, there
  exists $s \in R$ such that $pr = sp$. In what follows, if $M$ is a
  left $R$-module, $\{m_i:i \in I\}$ is a subset of $M$ and $\sum_{i
    \in I}r_im_i$ is a linear combination, the set $\{i \in I:r_i \neq
  0\}$ is suppose to be finite. We refer to \cite{KrylovTuganbaev} for
  facts about discrete valuation rings.

  First of all, we prove that if $M$ is a left $R$-module such that
  $pM$ is a direct sum of cyclic modules, then $M$ is a direct sum of
  cyclic modules too. Suppose that $pM= \bigoplus_{i \in I}Rn_i$ for a
  family of elements $\{n_i:i \in I\} \subseteq pM$ and write, for
  each $i \in I$, $n_i = pm_i$ for some $m_i \in M$. Then $S'=\{m_i: i
  \in I\}$ is an independent set since, if we take a linear
  combination $\sum_{i \in I}r_im_i$ which is equal to zero, then
  $\sum_{i \in I}pr_im_i = 0$. But, for each $i \in I$, $pr_i = s_ip$
  for some $s_i \in R$, and, consequently, $\sum_{i \in I}s_in_i = 0$;
  since $\{n_i: i \in I\}$ is independent, $s_i = 0$ for each $i \in
  I$, which implies that $r_i = 0$ for each $i \in I$. Now, extend
  this independent set to a maximal one, say $S = \{m_i: i \in I\}
  \cup \{n_j:j \in J\}$. Then $S$ generates $M$ since, given any $m
  \in M$, $pm = \sum_{i \in I}r_ipm_i$ for a family $\{r_i:i \in I\}
  \subseteq R$; writing, for each $i \in I$, $r_ip = ps_i$ for some
  $s_i \in R$, we get that $p\left(m-\sum_{i \in I}s_im_i\right)=0$
  and, consequently, $m-\sum_{i \in I}s_im_i$ belongs to the socle of
  $M$, which is contained in the submodule generated by $S$. Then $m$
  is generated by $S$ and we are done.

  Now, define the chain of submodules of $M$, $\{M_n:n < \omega\}$,
  recursively as follows: $M_0$ is the submodule generated by a
  maximal independent subset of $M$; if $M_n$ has been defined for
  some $n < \omega$, set $M_{n+1} = \{m \in M:pm \in M_n\}$. Note
  that, for each $n < \omega$, $M_n$ is a direct sum of cyclics by
  the previous result. It remains to prove that $M = \bigcup_{n <
    \omega}M_n$. Suppose that this is not true and let $m \in M -
  \bigcup_{n < \omega}M_n$; then $p^nm \notin M_0$ for each $n \in
  \omega$, which implies that $S \cup \{m\}$ is linearly independent,
  since for any linear combination, $\sum_{i \in I}r_im_i + rm$, if
  $\sum_{i \in I}r_im_i + rm = 0$, writing $r = up^nm$ for some unit
  $u \in R$ and $n < \omega$, we get $p^nm = \sum_{i \in I}-ur_im_i
  \in M_0$, which is a contradiction.

  In order to derive (2) note that, as it is proved in
  \cite[Proposition 7.4.3]{EnochsJenda}, if $R$ is countable then
  every flat module is filtered by countably generated flat
  modules. Since the ring is countable, these modules are actually
  countably presented. By \cite[Theorem]{Osofsky2}, each countably
  presented flat module is the direct limit of a countable system of
  projective modules; but it is easy to see that each countable direct
  poset has a well ordered cofinal subset. The conclusion is that each
  flat module is filtered by totally ordered direct limits of projective
  modules.
\end{proof}

Now if we assume the hyphotesis that there do not exist
inaccessible cardinals, we can prove that all totally ordered direct limits of
projective modules have finite projective dimension. We start with a
theorem in ZFC.

\begin{theorem}
  Let $M$ be a flat module. Suppose that $M$ is the direct limit of a
  well ordered system of projective modules, $(P_\alpha,f_{\alpha
    \beta})_{\alpha < \beta < \lambda}$, for some cardinal $\lambda$
  smaller than the first inaccessible cardinal. Then $M$ has finite
  projective dimension.
\end{theorem}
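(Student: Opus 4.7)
I would proceed by transfinite induction on $\lambda$, strengthening the statement so that the projective dimension is bounded by a natural number $n(\lambda)$ depending only on $\lambda$ (not on $M$ itself). This uniform-bound formulation is essential for the successor step.

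For the base case $\lambda \leq \aleph_0$, the module $M$ is a countable direct limit of projectives, so Osofsky's Theorem 2.3 (already invoked in Proposition \ref{p:AlephOmegaFlats}) gives $\mathrm{proj\,dim}\,M \leq 1$. In the singular case, I would use \cite[\S 3, Theorem 36]{Roitman} to select a well-ordered cofinal subset $J \subseteq \lambda$ of order type $\cf(\lambda) < \lambda$; the induced subsystem has direct limit $M$, and $\cf(\lambda)$ is still below the first inaccessible cardinal, so the inductive hypothesis applies and $\mathrm{proj\,dim}\,M \leq n(\cf(\lambda))$.

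The remaining case is $\lambda$ regular and uncountable. Since $\lambda$ is non-inaccessible, regularity forces $\lambda = \kappa^+$ for some infinite cardinal $\kappa < \lambda$ (still below the first inaccessible). Here I would construct a continuous $\kappa^+$-filtration $\{M_\beta : \beta < \kappa^+\}$ of $M$ such that each $M_\beta$ is itself a well-ordered direct limit of at most $\kappa$ projective modules from the system (or built out of them). The idea is a Löwenheim--Skolem/bookkeeping construction: at stage $\beta+1$, given $M_\beta$, one adds the next element of $M$ together with a witness from some $P_\alpha$ mapping onto it, then closes off under $\kappa$-many operations (adding kernels, predecessors in the direct system, etc.), exploiting the regularity of $\kappa^+$ to stay within size $\kappa$. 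By the inductive hypothesis each $M_\beta$, and likewise each consecutive quotient $M_{\beta+1}/M_\beta$ suitably arranged, has projective dimension bounded by $n(\kappa)$. One can then conclude via the standard filtration principle (Auslander's lemma: a module filtered by modules of projective dimension at most $n$ has itself projective dimension at most $n$) that $\mathrm{proj\,dim}\,M \leq n(\kappa)$.

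The main obstacle will be the filtration construction in the successor case, specifically arranging things so that each filtration piece genuinely inherits a well-ordered-direct-limit-of-projectives structure from the original system. The method of Proposition \ref{p:AlephOmegaFlats} gives a hint of the right technique on a small scale (extracting a countable subchain of $\leq \aleph_\omega$-generated direct summands $S_n \subseteq P_{\beta_n}$ whose direct limit recovers $M$); the plan is to transfinitize that argument, using that $\kappa^+$ is regular to carry out closure at successor stages within size $\kappa$ and continuity at limit stages. If direct continuity of the sub-systems is delicate, one may instead build a filtration with $M_{\beta+1}/M_\beta$ of bounded projective dimension, which still suffices for Auslander's lemma and hence for the induction to close.
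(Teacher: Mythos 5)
Your induction scheme --- a uniform bound $n(\lambda)$, Osofsky's theorem for $\lambda=\aleph_0$, and passage to a cofinal subset of order type $\cf(\lambda)$ in the singular case --- is exactly the paper's. The divergence, and the gap, is in the successor case $\lambda=\kappa^+$, which is the heart of the theorem. You propose to build a continuous filtration $\{M_\beta:\beta<\kappa^+\}$ of $M$ itself in which each $M_\beta$ is a well ordered direct limit of at most $\kappa$ of the $P_\alpha$'s (or of summands of them), by a L\"owenheim--Skolem closure argument. You flag this construction yourself as ``the main obstacle'' and do not carry it out, and it is genuinely delicate: for a subsystem $(S_\alpha)_{\alpha\in J}$ to have direct limit equal to the submodule of $M$ it generates, one must maintain kernel-stabilization conditions of the form $\ker f_{\alpha\beta}\cap S_\alpha=\ker f_\alpha\cap S_\alpha$ cofinally in $J$ (as in Proposition \ref{p:AlephOmegaFlats}), and preserving this together with continuity of the chain $\{M_\beta\}$ at limit stages is exactly where the work lies. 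In addition, the form of Auslander's lemma you invoke does not give the bound you state: if it is the \emph{submodules} $M_\beta$ (rather than the consecutive quotients) that have projective dimension at most $n$, a continuous union only yields projective dimension at most $n+1$, so the recursion should read $n(\kappa^+)=n(\kappa)+1$; this is harmless for closing the induction, but your claimed bound $n(\kappa)$ is not justified.

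The paper's successor step avoids filtering $M$ entirely. For each $\alpha<\kappa^+$ let $G_\alpha$ be the direct limit of the truncated system $(P_\gamma)_{\gamma<\alpha}$; each $G_\alpha$ is a well ordered direct limit of at most $\kappa$ projectives, so $\pd G_\alpha\le n(\kappa)$ by induction. In the canonical presentation $0\rightarrow X\rightarrow\bigoplus_{\alpha<\lambda}P_\alpha\rightarrow M\rightarrow 0$, the syzygy $X$ is, automatically and continuously, the union of the syzygies $X_\alpha$ from the canonical presentations $0\rightarrow X_\alpha\rightarrow\bigoplus_{\gamma<\alpha}P_\gamma\rightarrow G_\alpha\rightarrow 0$; since $\pd X_\alpha\le n(\kappa)-1$, Osofsky's Corollary 0.3 gives $\pd X\le n(\kappa)$ and hence $\pd M\le n(\kappa)+1$. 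No bookkeeping or closure argument is needed, because the continuity of $X=\bigcup_{\alpha<\lambda}X_\alpha$ comes for free from the canonical presentation. To salvage your route you would have to supply the full transfinite closure construction; the paper's device of shifting the filtration from $M$ to its first syzygy is the idea your sketch is missing.
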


\begin{proof}
  We shall prove that for each infinite cardinal $\lambda$ smaller
  than the first inaccesible cardinal, there exists a natural number
  $n_\lambda$ such that the direct limit of each direct system
  $(P_\alpha,f_{\alpha\beta})_{\alpha < \beta < \mu}$, with $\mu \leq
  \lambda$, consisting of projective modules has projective dimension
  less or equal than $n_\lambda$. We shall proceed inductively on
  $\lambda$.

  If $\lambda = \aleph_0$ the result follows from \cite[Theorem
  2.3]{Osofsky} (actually from \cite{Berstein}). Suppose that $\lambda
  > \aleph_0$ and that we have proven the claim for each direct system
  of cardinality less than $\lambda$; let
  $(P_\alpha,f_{\alpha\beta})_{\alpha < \beta < \lambda}$ be a direct
  system consisting of projective modules. If $\lambda$ is a direct limit
  cardinal, $\cf(\lambda)$ is smaller than $\lambda$ by
  hyphotesis. Since $\lambda$ has a cofinal subset of cardinality
  $\cf(\lambda)$, the induction hyphotesis implies that $\displaystyle
  \lim_{\longrightarrow}P_\alpha$ has projective dimension less or
  equal than $n_{\cf(\lambda)}$. Thus, the result is proven taken $n_\lambda
  = n_{\cf(\lambda)}$.

  It remains to prove case $\lambda$ successor. Assume that $\lambda =
  \mu^+$ for some infinite cardinal $\mu$. Denote, for each
  ordinal $\alpha < \lambda$, by $\displaystyle
  G_\alpha = \lim_{\substack{\longrightarrow\\\gamma < \alpha}}
  P_\gamma$ and, for any other ordinal $\beta > \alpha$, by $g_{\alpha
    \beta}:G_\alpha \rightarrow G_\beta$ the induced map by the
  structural maps of the corresponding direct limits. We obtain, in
  this way, a direct system $(G_\alpha,g_{\alpha\beta})_{\alpha <
    \beta < \lambda}$ whose direct limit is $\displaystyle
  \lim_{\longrightarrow}P_\alpha$ and such that the projective
  dimension of $G_\alpha$ is less or equal than $n_\mu$ (by induction
  hyphotesis). Now consider the canonical presentation of
  $\displaystyle \lim_{\longrightarrow}P_\alpha$,
  \begin{equation}\label{eq:1}
    0 \mapright X \mapright \bigoplus_{\alpha < \lambda}P_\alpha \mapright
    \lim_{\longrightarrow}P_\alpha \mapright 0
  \end{equation}
  and, for each $\alpha < \lambda$, the corresponding one of
  $G_\alpha$,
  \begin{displaymath}
    0 \mapright X_\alpha \mapright \bigoplus_{\gamma < \alpha}P_\gamma
    \mapright G_\alpha \mapright 0
  \end{displaymath}
  It is very easy to see that $X = \bigcup_{\alpha < \lambda}
  X_\alpha$ and that this union is continuous. Moreover, since
  $X_\alpha$ has projective dimension less or equal than $n_\mu-1$,
  $X$ has projective dimension less or equal than $n_\mu$ by
  \cite[Corollary 0.3]{Osofsky}. Then, looking at the short exact
  sequence (\ref{eq:1}), we conclude that $\displaystyle
  \lim_{\longrightarrow}P_\alpha$ has projective dimension less or
  equal than $n_\mu+1$. Thus, the result is proved setting $n_\lambda
  = n_\mu+1$.
\end{proof}

As a consequence of this result, if we assume that there do not exist
inaccessible cardinals we get:

\begin{corollary}
  Assume that there do not exist inaccessible cardinals. Then each
  flat module which is a totally ordered direct limit of projective modules
  has finite projective dimension.
\end{corollary}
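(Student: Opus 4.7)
The plan is to deduce this corollary directly from the preceding theorem by eliminating its cardinality restriction under the set-theoretic hypothesis. Let $M$ be a flat module which is a totally ordered direct limit of projective modules; write $M = \lim_{\longrightarrow} P_i$ for some totally ordered direct system $(P_i,f_{ij})_{i,j \in I}$ of projectives.

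The first step would be to replace $I$ by a well ordered index set, exactly as done at the beginning of the proof of Theorem~\ref{t:DoublePerp}: by \cite[\S 3, Theorem 36]{Roitman}, $I$ admits a well ordered cofinal subset $J$, and the subsystem indexed by $J$ still has $M$ as its direct limit. After reindexing by the corresponding ordinal (and, passing to a cofinal subset of cardinality $\cf(|J|)$ if desired), we obtain a well ordered direct system $(P_\alpha,f_{\alpha\beta})_{\alpha < \beta < \lambda}$ of projective modules with direct limit $M$, for some cardinal $\lambda$.

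The second and final step is to invoke the preceding theorem on this well ordered system. The assumption that no inaccessible cardinals exist makes the clause ``$\lambda$ is smaller than the first inaccessible cardinal'' vacuously true for every cardinal $\lambda$, so the theorem applies without restriction and yields that $M$ has finite projective dimension.

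There is essentially no obstacle here: the hard work is carried out in the preceding theorem, and the Roitman cofinality result provides the routine passage from totally ordered to well ordered direct systems. The only point to be careful about is purely set-theoretic, namely that under the no-inaccessibles assumption the class of cardinals ``below the first inaccessible'' really coincides with the class of all cardinals, so that the cardinality hypothesis of the theorem becomes vacuous regardless of how large $\lambda$ turns out to be.
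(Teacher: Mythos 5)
Your proposal is correct and matches the paper's own argument: reduce to a well ordered system via Roitman's cofinality theorem and then apply the preceding theorem, whose cardinality restriction becomes vacuous when no inaccessible cardinals exist (equivalently, the theorem's induction runs through all cardinals because every limit cardinal is then singular).
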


\begin{proof}
  Simply note that each totally ordered set has a cofinal well ordered
  subset (see \cite[\S 3, Theorem 36]{Roitman}). Then the proof of the
  preceding theorem works for this one.
\end{proof}

Recall that the ring $R$ (see \cite{EnochsJendaRamos}) is said to be
$n$-perfect if each flat module has projective dimension less or equal
than $n$. As a consequence of the last result we get:

\begin{corollary}\label{c:n-perfect}
  Assume that there do not exist inaccessible cardinals. If each flat
  module is filtered by totally ordered direct limits of projective
  modules, then there exists a natural number $n$ such that $R$ is
  left $n$-perfect.
\end{corollary}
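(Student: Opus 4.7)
The plan is to combine the preceding theorem and its corollary with a cardinality argument, then apply the standard direct-sum trick to pass from ``every flat module has finite projective dimension'' to a uniform bound. Concretely, I first show that every flat module $F$ has finite projective dimension, with a bound depending only on $|F|$ and $|R|$; the conclusion then follows by considering the direct sum of putative counterexamples.

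For the first step, fix a flat module $F$ of infinite cardinality $\kappa$ and set $\mu := \max(\kappa,|R|)$. By hypothesis, $F$ has a filtration $\{F_\alpha\}$ in which each piece $T_\alpha := F_{\alpha+1}/F_\alpha$ is a totally ordered direct limit of projective modules, and each $T_\alpha$ is a subquotient of $F$, so $|T_\alpha| \le \kappa \le \mu$. The key auxiliary claim is that any totally ordered direct limit of projectives representing a module of cardinality $\le \kappa$ can be re-expressed as the direct limit of a \emph{well-ordered} system of projectives of length at most $\mu$. This is a L\"owenheim--Skolem style refinement: given a representation of $T_\alpha$ as direct limit of projectives $P_i$ along a totally ordered index set $I$, one keeps for each of the $\kappa$ elements of $T_\alpha$ a witnessing index from $I$, and then closes $\omega$ times under the indices that witness the identifications in the direct limit; the cardinalities are controlled by $|R|$ and the number of generators of $T_\alpha$, so the resulting totally ordered subset of $I$ has cardinality at most $\mu$, and \cite[\S 3, Theorem 36]{Roitman} yields a cofinal well-ordered refinement of length at most $\mu$. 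Applying the preceding theorem with $\lambda = \mu$ (permissible since no inaccessible cardinal exists by hypothesis), one obtains $\pd T_\alpha \le n_\mu$ uniformly in $\alpha$. The closure of $\{M : \pd M \le n_\mu\}$ under filtrations (a dimension-shifting consequence of \cite[Corollary 0.3]{Osofsky}, already used in the proof of the preceding theorem) then yields $\pd F \le n_\mu < \infty$.

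For the final step, suppose for contradiction that $R$ is not left $n$-perfect for any $n$. For each $n$ pick a flat module $F_n$ with $\pd F_n > n$ and set $F := \bigoplus_n F_n$. Then $F$ is flat, and each $F_n$ is a direct summand of $F$, so $\pd F \ge \pd F_n > n$ for every $n$; hence $\pd F = \infty$. But the previous step applied to $F$ yields $\pd F \le n_{\max(|F|,|R|)} < \infty$, a contradiction. Therefore $R$ is left $n$-perfect for some $n$. The main technical obstacle is the L\"owenheim--Skolem step, where one must carefully track how cardinalities grow under closure by identifying indices and verify that the $\omega$-fold iteration suffices.
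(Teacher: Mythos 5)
Your overall architecture --- first prove that every flat module has finite projective dimension, then obtain a uniform bound via a direct sum of putative counterexamples --- is exactly the paper's: the citation of \cite[Lemma 3.9]{Izurdiaga} in the paper's proof is precisely that direct-sum trick, and your version of it is fine, since $\bigoplus_n F_n$ is flat and contains each $F_n$ as a direct summand. The genuine problem is your L\"owenheim--Skolem step. As sketched it does not work: the projective modules $P_i$ in the given totally ordered system may be arbitrarily large, independently of $|T_\alpha|$ and $|R|$. Once an index $i$ is placed in your subsystem $J$, injectivity of $\varinjlim_{j \in J} P_j \rightarrow T_\alpha$ forces you to put into $J$ witnesses killing \emph{every} element of $\ker(P_i \rightarrow T_\alpha)$, and that kernel has the cardinality of $P_i$, not of $T_\alpha$; so ``closing $\omega$ times under witnessing indices'' does not keep $|J| \leq \mu$. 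The repair is the one carried out in Proposition \ref{p:AlephOmegaFlats}: use Kaplansky's theorem to replace each $P_i$ by a $\leq\mu$-generated direct summand $S_i$ containing the images of the previously chosen summands, and split into the cases $\cf(I) \leq \mu$ (take a cofinal well-ordered subset of size $\leq \mu$ via \cite[\S 3, Theorem 36]{Roitman}) and $\cf(I) > \mu$ (so that $\mu$-many witnesses admit an upper bound in $I$). None of this appears in your sketch, and you yourself flag it as an unresolved obstacle.

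Fortunately, the entire step is dispensable, which is why the paper's proof is shorter. Under the hypothesis that no inaccessible cardinals exist, the preceding corollary applies to \emph{every} totally ordered direct limit of projectives with no cardinality restriction. For a fixed flat module $F$, the filtration has only set-many pieces; replacing each piece by a cofinal well-ordered system, their lengths are bounded by some cardinal $\lambda$, and the uniform bound $n_\lambda$ established inside the proof of the preceding theorem applies to all pieces simultaneously. Closure of $\{M : \pd M \leq n_\lambda\}$ under filtrations --- for which the appropriate reference is Auslander's theorem \cite{Auslander}, as the paper uses, rather than \cite{Osofsky} alone --- then gives $\pd F \leq n_\lambda < \infty$. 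With that, your direct-sum argument closes the proof exactly as in the paper.
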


\begin{proof}
  By the preceding corollary, each module which is a totally ordered
  direct limit of projective modules has finite projective
  dimension. By the hyphotesis and Auslander's Theorem
  (\cite[Proposition 3]{Auslander}), each flat module has finite
  projective dimension. Now, it is easy to see that there exists a
  natural number $n$ such that each flat module has projective
  dimension less or equal than $n$: simply use \cite[Lemma
  3.9]{Izurdiaga} taking $\mathcal X$ the class of all projective
  modules and $\mathcal Y$ the class of all totally ordered direct limits of
  projective modules, which is clearly closed under countably direct
  sums.
\end{proof}

\begin{bibdiv}
  \begin{biblist}
    \bibselect{Refs}
  \end{biblist}
\end{bibdiv}
 
\end{document}